\theoremstyle{plain}
\newtheorem {lemma}{Lemma}[]
\newtheorem {theorem}[lemma]{Theorem}
\newtheorem {corollary}[lemma]{Corollary}
\newtheorem {proposition}[lemma]{Proposition}
\theoremstyle{remark}
\newtheorem* {remark}{Remark}
\newtheorem {remarks}[lemma]{Remarks}
\theoremstyle{definition}
\newcommand\CK[1][1]{\operatorname{CK}_{#1}}
\newcommand\SK[1][1]{\operatorname{SK}_{#1}} 
\DeclareMathOperator{\End}{End}
\newcommand{\chr}{\operatorname{char}}
\newcommand{\Nrd}[1][{}]{{\operatorname{Nrd}_{#1}}} 
\newcommand{\Trd}[1][{}]{{\operatorname{Trd}_{#1}}} 
\newcommand\tensor[1][{}]{{\otimes_{#1}}}
\newcommand\GL[1][d]{{\operatorname{GL}_{#1}}} 
\newcommand\SL[1][d]{{\operatorname{SL}_{#1}}}
\newcommand\paper[1]{{\it {#1}}}
\long\def\forget#1\forgotten{}
\begin{document}

\title[$\SK$ of Azumaya algebras]{$\SK$ of Azumaya algebras over Hensel Pairs}

\author{Roozbeh Hazrat}
\address{
Dept. of Pure Mathematics\\
Queen's University\\
Belfast BT7 1NN\\
United Kingdom} \email{r.hazrat@qub.ac.uk}

\date{}

\begin{abstract}
Let $A$ be an Azumaya algebra of constant rank $n^2$ over a Hensel
pair $(R,I)$ where $R$ is a semilocal ring with $n$ invertible in
$R$. Then the reduced Whitehead group $\SK(A)$ coincides with its
reduction $\SK(A/IA)$. This generalizes a result of
\cite{azumayask1} to non-local Henselian rings.

\end{abstract}

\maketitle

Let $A$ be an Azumaya algebra over a ring $R$ of constant rank
$n^2$. Then there is an \'etale faithfully flat commutative ring $S$ over $R$ which splits $A$, i.e., $A\otimes_R S \cong M_n(S)$. For $a \in A$, considering $a \otimes 1$ as an element of $M_n(S)$, one then defines the reduced characteristic polynomial of $a$ as $$\chr_A(x,a)=\det(x-a\otimes1)=x^n-\Trd(a)x^{n-1}+\cdots+(-1)^n\Nrd(a).$$
Using descent theory, one can show that $\chr_A(x,a)$ is independent of $S$ and the isomorphism above and lies in $R[x]$. Furthermore, the element $a$ is invertible in $A$ if and only if $\Nrd_A(a)$, the reduced norm of $a$,  is invertible in $R$ (see~\cite{knus}, III.1.2, and ~\cite{saltman}, Theorem ~4.3). Let $\SL[](1,A)$ be the set of elements of $A$ with the reduced norm $1$. Since the reduced norm map respects the scalar extensions, it defines the smooth group scheme $\SL[]_{1,A}:T\rightarrow \SL[](1,A_T)$ where $A_T=A\otimes_R T$ for an $R$-algebra $T$. Consider the short exact sequence of smooth group schemes 
$$1 \longrightarrow \SL[]_{1,A} \longrightarrow \GL[]_{1,A} \stackrel{\Nrd}{\longrightarrow} G_m \longrightarrow 1$$ where $\GL[]_{1,A}:T\rightarrow A_T^*$ and 
$G_m(T)=T^*$ for an  $R$-algebra $T$ where $A_T^*$ and $T^*$ are invertible elements of $A_T$ and $T$, respectively.  This exact sequence induces a long exact sequence 
\begin{equation}\label{longexactse}
1 \longrightarrow \SL[](1,A) \longrightarrow A^* \stackrel{\Nrd}{\longrightarrow} R^* \longrightarrow H^1_{et}(R,\SL[](1,A)) \longrightarrow H^1_{et}(R,\GL[](1,A)) \rightarrow \cdots 
\end{equation}
Let $A'$
denote the commutator subgroup of $A^*$. One defines the reduced
Whitehead group of $A$ as $\SK(A)=\SL[](1,A)/A'$ which is a
subgroup of (non-stable) $K_1(A)=A^*/A'$.
Let $I$ be an ideal of $R$. Since the reduced norm is compatible with extensions, it induces the map $\SK(A) \rightarrow \SK(\overline A)$, where $\overline A=A/IA$.  A natural question arises here is, under what circumstances and
for what ideals $I$ of $R$, this homomorphism would be injective and/or surjective and thus the reduced Whitehead group of $A$
coincides with its reduction. The following observation shows that
even in the case of a split Azumaya algebra, these two groups
could differ: consider the split Azumaya algebra $A=M_n(R)$ where
$R$ is an arbitrary commutative ring (and $n>2$). In this case the reduced
norm coincides with the ordinary determinant and $\SK(A)=
\text{SL}_n(R)/[\text{GL}_n(R),\text{GL}_n(R)]$. There are
examples such that $\SK(A) \neq 1$, in fact not even torsion. But
in this setting, obviously $\SK(\overline A) =1$ for $\overline A=A/mA$
where $m$ is a maximal ideal of $R$ (for some examples see \cite{rosen}, Chapter 2).

If $I$ is contained in the Jacobson radical $J(R)$, then $I A
\subset J(A)$ (see, e.g., \cite{greco2}, Lemma 1.4) and
(non-stable) $K_1(A)\rightarrow K_1(\overline A)$ is surjective, thus its restriction to $\SK$ is also surjective. 

It is observed by Grothendieck  (\cite{groth1}, Theorem 11.7) that if $R$ is a local Henselian ring with maximal ideal $I$ and $G$ is an affine, smooth group scheme, then $H^1_{et}(R,G)\rightarrow H^1_{et}(R/I,G/IG)$ is an isomorphism. This was further extended to Hensel pairs by Strano~\cite{strano1983}. Now if further $R$ is a semilocal ring then $H^1_{et}(R,\GL[](1,A))=0$, and thus from the sequence (\ref{longexactse}) we have the following commutative diagram:
\begin{equation}\label{diag}
\begin{split}
\xymatrix{
& & (1+IA)A'/A'  \ar[r] \ar[d] & 1+I \ar[d]& \\
1 \ar[r] & \SK(A) \ar[r] \ar[d] & K_1(A) \ar[r]^{\Nrd}
\ar[d]& R^*
\ar[r] \ar[d] & H^1_{et}(R,\SL[](1,A)) \ar[r] \ar[d]^{\cong} & 1 \\
1  \ar[r] & \SK(\overline A) \ar[r]  & K_1(\overline A) \ar[r]^{\Nrd} \ar[d] &  {\overline R}^* \ar[r]
\ar[d] & H^1_{et}(\overline R,\SL[](1,\overline A)) \ar[r] & 1\\
& & 1 & 1 &}
 \end{split}
\end{equation}
The aim of this note is to prove that for the Hensel pair $(R,I)$ where $R$ is a semilocal ring, the map $\SK(A)\rightarrow \SK(\overline A)$ is also an isomorphism. This extends a result of \cite{azumayask1} to non-local Henselian rings.

Recall that the pair $(R,I)$ where $R$ is a
commutative ring and $I$ an ideal of $R$ is called a Hensel pair
if for any polynomial $f(x) \in R[x]$,  and $b \in R/I$ such that
$\overline f(b)=0$ and $\overline f'(b)$ is invertible in $R/I$, then there
is $a \in R$ such that $\overline a=b$ and $f(a)=0$ (for other
equivalent conditions, see Raynaud~\cite{raynaud}, Chap. XI).

In order to prove the statement, we use a result of Vaserstein~\cite{Vas} which
establishes the (Dieudonn\`e) determinant in the setting of
semilocal rings. The crucial part is to prove a version of
Platonov's congruence theorem \cite{plat} in the setting of an
Azumaya algebra over a Hensel pair. The approach to do this was
motivated by Suslin in \cite{suslin}. We also need to use the
following facts established by Greco in \cite{greco1,greco2}.

\begin{proposition}[\cite{greco2}, Prop. 1.6] \label{prop1}
Let $R$ be a commutative ring, $A$ be an $R$-algebra, integral
over $R$ and finite over its center. Let $B$ be a commutative $R$-subalgebra
of $A$ and $I$ an ideal of $R$. Then $IA \cap B \subseteq
\sqrt{IB}$.
\end{proposition}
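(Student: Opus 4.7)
The plan is to apply the Cayley--Hamilton (determinant) trick to the $Z$-linear endomorphism of $A$ given by left multiplication by $b$, where $Z$ denotes the center of $A$. Since the structural map $R\to A$ lands in $Z$, the submodule $IA$ equals $(IZ)A$ and is a two-sided ideal of $A$. Using that $A$ is finite over $Z$, pick $Z$-module generators $a_1,\ldots,a_n$ of $A$, and let $b\in IA\cap B$. Left multiplication by $b$ carries $A$ into $(IZ)A$, so each $ba_j=\sum_k z_{jk}a_k$ with $z_{jk}\in IZ$. The determinant trick applied to the matrix $(z_{jk})$ yields an identity
$$b^n + c_{n-1} b^{n-1} + \cdots + c_1 b + c_0 = 0 \qquad \text{in } A,$$
with $c_i\in (IZ)^{n-i}\subseteq IZ$ for each $i$.

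The difficulty is that the coefficients $c_i$ lie in the center $Z$, not in $B$, so the identity above lives in $A$ rather than in $B$. To get around this, form the intermediate ring $C := B\cdot Z\subseteq A$. Since $Z$ is the center of $A$ and $B$ is commutative, $C$ is a commutative subring of $A$; and because $A$ is integral over $R\subseteq B$, every element of $Z$ is integral over $B$, making $B\subseteq C$ an integral extension of commutative rings. Rearranging the Cayley--Hamilton identity gives $b^n = -\sum_{i=0}^{n-1} c_i b^i$, and each summand $c_i b^i$ lies in $(IZ)\cdot B \subseteq IC$; hence $b^n\in IC$, i.e.\ $b\in\sqrt{IC}$.

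To conclude, invoke the standard commutative-algebra fact that for an integral extension $B\subseteq C$ of commutative rings and an ideal $J\subseteq B$, one has $\sqrt{JC}\cap B = \sqrt{J}$; this is a consequence of lying-over (every prime of $B$ containing $J$ is contracted from a prime of $C$ containing $JC$), or equivalently of a second determinant-trick argument carried out inside a finitely generated $B$-subalgebra of $C$. Applied with $J = IB$, this yields $\sqrt{IC}\cap B = \sqrt{IB}$, so from $b\in B\cap\sqrt{IC}$ we obtain $b\in\sqrt{IB}$, as required. The main obstacle is precisely the passage from a Cayley--Hamilton relation with central (rather than $B$-) coefficients to a conclusion inside $B$; the enlarged commutative ring $C = BZ$ is the key device, since it absorbs the central coefficients while remaining integral over $B$, so that radicals transport back correctly via lying-over.
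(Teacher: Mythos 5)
The paper does not prove this proposition; it is imported verbatim from Greco (\cite{greco2}, Prop.~1.6) and used as a black box, so there is no in-paper argument to compare against. Judged on its own, your proof is correct and complete. The two potential weak points both hold up: (a) the Cayley--Hamilton step is legitimate because left multiplication by $b$ is $Z$-linear ($Z$ being central), $A$ is a finitely generated $Z$-module by hypothesis, and $bA\subseteq IA=(IZ)A$ since $IA$ is a two-sided ideal, so the determinant trick over the commutative ring $Z$ applies and evaluation at $1\in A$ gives the monic relation with coefficients in $IZ$; and (b) the passage back from $C=BZ$ to $B$ is sound: $C$ is a commutative subring of $A$ (centrality of $Z$ plus commutativity of $B$), it is integral over $B$ because every element of $Z\subseteq A$ is integral over $R\subseteq B$, and the identity $\sqrt{(IB)C}\cap B=\sqrt{IB}$ for integral extensions is the standard lying-over fact (if $P\supseteq IB$ is prime in $B$ and $Q$ lies over $P$, then $IC\subseteq PC\subseteq Q$). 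The hypothesis that $A$ is integral over $R$ is used exactly where it should be, namely to make $C$ integral over $B$; the finiteness of $A$ over its center is used exactly in the determinant trick. This is a clean, self-contained argument in the same general spirit (integral dependence plus the determinant trick) as Greco's original.
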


\begin{corollary}[\cite{greco1}, Cor. 4.2] \label{cor1}

Let $(R,I)$ be a Hensel pair and let $J \subseteq \sqrt{I}$ be an
ideal of $R$. Then $(R,J)$ is a Hensel pair.
\end{corollary}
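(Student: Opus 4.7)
The plan is to verify the Hensel lifting property for $(R,J)$ directly, by combining Newton iteration (which stays inside $J$) with the Hensel lifting already available for $(R,I)$. The quantitative input is $J \subseteq \sqrt{I}$: any element of $J$ lands inside $I$ after a bounded number of squarings, so Newton iteration reaches the regime where $(R,I)$-Hensel applies in finitely many steps.

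First I would record that $\sqrt{I} \subseteq J(R)$: for $x \in \sqrt{I}$ with $x^n \in I$ and any $r \in R$, the identity $(1+rx)\bigl(\sum_{i=0}^{n-1}(-rx)^i\bigr) = 1 - (-rx)^n$ shows that $1+rx$ has an inverse up to a factor in $1+I$, and the latter is a unit because $I \subseteq J(R)$ for any Hensel pair. Consequently any element that is a unit modulo $J$ is already a unit in $R$. Now take $f \in R[x]$ and $b \in R$ with $f(b) \in J$ and $f'(b) \in R^{*}$. Set $u_0 = f(b)$ and choose $n$ with $u_0^n \in I$. Run Newton iteration $b_0 = b$, $b_{k+1} = b_k - f(b_k)/f'(b_k)$; each $f'(b_k)$ is a unit because $f'(b_k) \equiv f'(b) \pmod{(u_0)}$. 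A Taylor expansion of $f$ around $b_k$ yields $f(b_{k+1}) \in (f(b_k))^2$ and $b_{k+1} - b_k \in (f(b_k))$, so by induction $f(b_k) \in (u_0^{2^k})$ and $b_k - b \in (u_0) \subseteq J$. Picking $k$ with $2^k \geq n$ gives $f(b_k) \in I$.

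The Hensel property of $(R,I)$ now supplies $a \in R$ with $f(a)=0$ and $a \equiv b_k \pmod{I}$, and I would sharpen this congruence to $a - b_k \in (f(b_k))$: expanding $0 = f(a) = f(b_k) + (a-b_k)\bigl[f'(b_k) + (a-b_k)H\bigr]$ for some $H \in R$, the bracket is a unit (since $a - b_k \in I \subseteq J(R)$), so $a-b_k \in (f(b_k)) \subseteq J$. Combined with $b_k - b \in J$ this yields $a \equiv b \pmod{J}$ with $f(a)=0$, as required. The main obstacle is precisely this refinement: without promoting $a \equiv b_k \pmod{I}$ to $a \equiv b_k \pmod{(f(b_k))}$, the Newton iterates remain inside $J$ but the Hensel lift in $(R,I)$ would contribute an error living in $I$, which need not lie in $J$.
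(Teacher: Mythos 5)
Your argument is correct. Note that the paper itself gives no proof of this statement---it is quoted directly from Greco (Cor.~4.2 of the cited paper), so there is no in-paper argument to compare against; your proposal supplies a self-contained verification of the root-lifting definition recorded in this note. The two-stage structure is exactly what the statement needs: Newton iteration moves the approximate root into the regime $f(b_k)\in I$ (this is where $J\subseteq\sqrt{I}$ enters, via $f(b)^n\in I$ together with the doubling $f(b_{k+1})\in(f(b_k)^2)$), and the sharpening of the $(R,I)$-Hensel lift from $a\equiv b_k \pmod{I}$ to $a-b_k\in(f(b_k))$---which you correctly flag as the crux---is what keeps the final root congruent to $b$ modulo $J$ rather than merely modulo $I$. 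The supporting facts all check: $\sqrt{I}\subseteq J(R)$ guarantees that each $f'(b_k)$ and each bracket $f'(b_k)+(a-b_k)H$ is a genuine unit of $R$, so the divisions in the iteration and the final factorization $-f(b_k)=(a-b_k)\cdot(\mathrm{unit})$ are legitimate, and the induction $f(b_k)\in(u_0^{2^k})$, $b_k-b\in(u_0)\subseteq J$ closes cleanly. What this buys over citing Greco is economy of means: the proof uses only the polynomial lifting property as defined in the paper, with no appeal to the equivalent characterizations of Hensel couples (idempotent lifting, decomposition of finite algebras) on which Greco's development rests.
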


\begin{theorem}[\cite{greco1}, Th. 4.6] \label{thm1}
Let $(R,I)$ be a Hensel pair and let $B$ be a commutative
$R$-algebra integral over $R$. Then $(B,IB)$ is a Hensel pair.
\end{theorem}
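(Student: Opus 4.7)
I want to verify the polynomial form of the Hensel condition for $(B, IB)$: given $g(x) \in B[x]$ and $\bar b \in B/IB$ with $\bar g(\bar b) = 0$ and $\bar g'(\bar b)$ a unit in $B/IB$, produce $a \in B$ with $\bar a = \bar b$ and $g(a) = 0$. The strategy has two movements: (i) reduce to the case that $B$ is module-finite over $R$, and (ii) in that case realize $a$ as the image of $x$ under a projection of $C := B[x]/(g)$ onto $B$ built from a lifted idempotent.

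I would treat step (ii) first, as it is the conceptual heart. Assume $B$ is module-finite over $R$ and, after standard manipulations, take $g$ monic; then $C = B[x]/(g)$ is a finite commutative $R$-algebra. The Hensel hypothesis yields a coprime factorisation $\bar g = (x - \bar b)\,\bar h(x)$ in $(B/IB)[x]$, and CRT provides
\[
C/IC \;\cong\; B/IB \,\times\, (B/IB)[x]/(\bar h),
\]
with an idempotent $\bar e \in C/IC$ projecting to the first factor. Because $(R, I)$ is a Hensel pair and $C$ is a finite commutative $R$-algebra, $\bar e$ lifts to an idempotent $e \in C$ (this idempotent-lifting property is a standard equivalent formulation of the Hensel condition on a pair; see Raynaud~\cite{raynaud}). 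Centrality of $e$ makes $eC$ a direct summand of the free $B$-module $C$, hence finitely generated projective over $B$, with $eC/I(eC) \cong B/IB$ via the structure map. As $I \subseteq J(R)$ and $B$ is integral over $R$, $IB \subseteq J(B)$, so Nakayama gives surjectivity of $B \to eC$; projectivity splits this surjection and a further Nakayama on the kernel gives $B \cong eC$. The composite $C \twoheadrightarrow eC \cong B$ then sends the class of $x$ to the desired root.

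For step (i), let $b_0 \in B$ lift $\bar b$ and $u \in B$ lift $\bar g'(\bar b)^{-1}$, and set $B_0 := R[b_0,\, u,\, \text{coefficients of } g]$, which is module-finite over $R$. By step (ii), $(B_0, IB_0)$ is a Hensel pair, and Corollary~\ref{cor1} then gives that $(B_0, \sqrt{IB_0})$ is also Hensel. By \Pref{prop1}, $IB \cap B_0 \subseteq \sqrt{IB_0}$, and since $g(b_0),\, g'(b_0)u - 1 \in IB \cap B_0$, both lie in $\sqrt{IB_0}$; thus $\bar b_0$ is a simple root of $\bar g$ in $B_0/\sqrt{IB_0}$, so the Hensel property of $(B_0, \sqrt{IB_0})$ supplies $a \in B_0 \subseteq B$ with $g(a) = 0$ and $a - b_0 \in \sqrt{IB_0} \subseteq \sqrt{IB}$. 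Then $\bar a$ and $\bar b$ are roots of $\bar g$ in $B/IB$ differing by a nilpotent, and since $\bar g'(\bar b) \in (B/IB)^{*}$ a Taylor expansion forces $\bar a = \bar b$.

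The main obstacle I anticipate is the bookkeeping in step (i). \Pref{prop1} supplies only the weaker containment $IB \cap B_0 \subseteq \sqrt{IB_0}$, so one must detour through $(B_0, \sqrt{IB_0})$ using Corollary~\ref{cor1} and then close the gap back to $IB$ via the nilpotent-matching argument in $B/IB$; keeping the simple-root data (in particular the coprimality needed by the CRT decomposition) alive through these manipulations is what makes the preliminary results invoked above genuinely indispensable.
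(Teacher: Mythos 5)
The paper does not actually prove this statement --- it imports it verbatim from Greco (\cite{greco1}, Th.~4.6) as a black box to be used in step (i) of the main theorem --- so there is no internal proof to compare against; your argument has to be judged on its own. On those terms it is essentially sound, and it is a faithful reconstruction of a Greco-style argument. The core of step (ii) is correct: for monic $g$ over a module-finite $B$, the ring $C=B[x]/(g)$ is a finite $R$-algebra, $\bar g'(\bar b)$ being a unit does give comaximality of $(x-\bar b)$ and $\bar h$ (since $\bar g'(\bar b)=\bar h(\bar b)$ and $\bar h(x)-\bar h(\bar b)\in(x-\bar b)$), the lifted idempotent makes $eC$ a finitely generated projective $B$-module with $eC/I(eC)\cong B/IB$, and the double Nakayama (using $IB\subseteq J(B)$ and the fact that the kernel, being a direct summand of $B$, is cyclic) correctly yields $B\isom eC$ and hence the retraction $C\to B$ carrying $x$ to the desired root. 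Step (i) is also correct, and it is a pleasant consistency check that your passage from $IB\cap B_0$ to $\sqrt{IB_0}$ via \Pref{prop1} and Corollary~\ref{cor1}, followed by the nilpotent-perturbation argument ($\epsilon(\bar g'(\bar b)+\epsilon c)=0$ with $\epsilon$ nilpotent and $\bar g'(\bar b)$ a unit forces $\epsilon=0$), is exactly the maneuver the paper itself performs with these same lemmas inside the proof of \Tref{main}.

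Two places lean on unproved ``standard'' facts, and one of them deserves to be flagged. The idempotent-lifting characterization of Hensel pairs for finite commutative $R$-algebras is indeed among the equivalent conditions in Raynaud~\cite{raynaud}, Chap.~XI, which the paper explicitly points to, so citing it is fair. But ``after standard manipulations, take $g$ monic'' is hiding a real step: the paper's definition of Hensel pair quantifies over \emph{arbitrary} polynomials, the leading coefficient of $g$ need not be a unit, and every ingredient of your step (ii) --- freeness of $C$ over $B$, the factor theorem producing $\bar g=(x-\bar b)\bar h$, the CRT splitting --- genuinely requires monicity. The reduction from the general to the monic condition (given $IB\subseteq J(B)$) is true, but it is itself one of the nontrivial equivalences in Raynaud Chap.~XI (most cleanly seen via the \'etale lifting characterization applied to the standard \'etale algebra $(B[x]/(g))_{g'}$); it cannot be achieved by an elementary substitution. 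You should either cite that equivalence explicitly or note that you are verifying the monic form of the condition together with $IB\subseteq J(B)$ and invoking the equivalence of formulations. With that made explicit, the proof is complete.
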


We are in a position to prove the main theorem of this note.
\begin{theorem}\label{main}
Let $A$ be an Azumaya algebra of constant rank $n^2$ over a Hensel
pair $(R,I)$ where $R$ is a semilocal ring with $n$ invertible in
$R$. Then $\SK(A) \cong \SK(\overline A)$ where $\overline A=A/IA$.
\end{theorem}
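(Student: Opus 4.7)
The plan is to deduce the isomorphism $\SK(A)\cong\SK(\overline A)$ from the commutative diagram \eqref{diag} by a diagram chase, the substantive input being a Platonov-style congruence theorem in the Hensel setting. The preparatory ingredients for the chase are already in hand: since $IA\subseteq J(A)$, the map $A^{*}\twoheadrightarrow\overline A^{*}$ (and hence $K_1(A)\twoheadrightarrow K_1(\overline A)$) is surjective, and the right-hand vertical of \eqref{diag} is an isomorphism via Grothendieck--Strano.

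\textbf{Surjectivity.} Given $\overline x\in\SK(\overline A)$, I would lift it to $\alpha\in A^{*}$ along $K_1(A)\twoheadrightarrow K_1(\overline A)$; then $\Nrd(\alpha)\in 1+I$. Since $n$ is invertible in $R$, the polynomial $f(t)=t^{n}-\Nrd(\alpha)\in R[t]$ reduces modulo $I$ to $t^{n}-1$, which has $1$ as a simple root. The Hensel pair property produces $v\in 1+I$ with $v^{n}=\Nrd(\alpha)$, so $v^{-1}\alpha\in\SL[](1,A)$ is a lift of $\overline x$ to $\SK(A)$.

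\textbf{Injectivity and the congruence theorem.} A further chase of \eqref{diag} identifies the kernel of $\SK(A)\to\SK(\overline A)$ with the image of $\SL[](1,A)\cap(1+IA)$: any $\alpha\in\SL[](1,A)$ with $\overline\alpha\in\overline A'$ can be multiplied by a lift $\beta\in A'$ of the relevant commutator product so that $\alpha\beta^{-1}\in\SL[](1,A)\cap(1+IA)$. Injectivity therefore reduces to the \emph{congruence theorem}
\begin{equation*}
\SL[](1,A)\cap(1+IA)\subseteq A',
\end{equation*}
which is the Azumaya/Hensel-pair analogue of Platonov \cite{plat} and which I expect to be the main technical obstacle. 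Following Suslin \cite{suslin}, given $a\in\SL[](1,A)\cap(1+IA)$ I would pass to the commutative $R$-subalgebra $B=R[a]$; by \Pref{prop1} one has $a-1\in IA\cap B\subseteq\sqrt{IB}$, and by \Tref{thm1} the pair $(B,IB)$ is Hensel. The reduced characteristic polynomial $\chr_A(t,a)\in R[t]$ reduces modulo $I$ to $(t-1)^{n}$, and the idea is to lift an explicit commutator expression for $\overline a=1$ in $\overline A$ back to $A$ by iterated application of the Hensel property on integral extensions of $R$ furnished by \Tref{thm1} and \Cref{cor1}, using Vaserstein's Dieudonn\'e determinant \cite{Vas} in the semilocal setting as the normal form for $K_1(A)$.

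\textbf{Main obstacle.} The genuine difficulty lies in executing the last step: coordinating the Hensel-extension machinery of Greco with Vaserstein's determinant so that the commutator identities produced over an étale cover descend to $A$ in a way that remains compatible with the reduced norm, in the absence of the division-algebra techniques available to Platonov in the valued-field case. Everything else amounts to a routine diagram chase in \eqref{diag}.
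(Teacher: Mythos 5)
Your surjectivity argument and your reduction of injectivity to the congruence theorem $\SL[](1,A)\cap(1+IA)\subseteq A'$ both match the paper, but the congruence theorem itself --- which you correctly identify as the main technical obstacle --- is left unproven, and the route you sketch (lifting a commutator expression for $\overline a$ from $\overline A$, descending commutator identities from an \'etale cover) is not the mechanism that works. Note that for $a\in\SL[](1,A)\cap(1+IA)$ one has $\overline a=1$ in $\overline A$, so there is no nontrivial commutator expression downstairs to lift; the entire difficulty lives upstairs in $A$.

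The paper's actual argument is a divisibility--torsion argument that never lifts commutators. First, $1+I$ is \emph{uniquely} $n$-divisible (Hensel lifting for $x^n-u$, plus invertibility of $n$ for uniqueness), and $1+IA$ is $n$-divisible: for $a\in 1+IA$ set $B=R[a]$; by \Tref{thm1} the pair $(B,IB)$ is Hensel, by \Pref{prop1} one has $IA\cap B\subseteq\sqrt{IB}$, and then by Corollary~\ref{cor1} the pair $(B,IA\cap B)$ is again Hensel --- this last step, which you omit, is exactly what lets one extract roots of $a\in 1+(IA\cap B)$ inside $B$. Second --- and this ingredient is entirely absent from your proposal --- $\SK(A)$ is $n^2$-torsion: one shows $N_{A/R}(a)=\Nrd_A(a)^n$ by a descent computation, then uses the Skolem--Noether theorem over the semilocal ring $A$ together with Vaserstein's Dieudonn\'e determinant to write $1=N_{A/R}(a)=a^{n^2}c_a$ with $c_a\in A'$. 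The congruence theorem then falls out in three lines: given $a\in\SL[](1,A)\cap(1+IA)$, pick $b\in 1+IA$ with $b^{n^2}=a$; then $\Nrd_A(b)\in 1+I$ satisfies $\Nrd_A(b)^{n^2}=1$, so $\Nrd_A(b)=1$ by unique $n$-divisibility, hence $b\in\SL[](1,A)$ and $a=b^{n^2}\in A'$ by the torsion statement. Vaserstein's determinant enters only through this torsion computation, not as a ``normal form for $K_1(A)$'' against which commutator identities are checked; without the torsion step your outline cannot be completed as written.
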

\begin{proof}

Since for any $a \in A$, $\overline{\Nrd_A(a)}=\Nrd_{\overline A}(\overline
a)$, it follows that there is a homomorphism $\phi:\SL[](1,A)
\rightarrow \SL[](1,\overline A)$. We first show that $\ker \phi
\subseteq A'$, the commutator subgroup of $A^*$. In the setting of
valued division algebras, this is the Platonov congruence theorem
\cite{plat}. We shall prove this in several steps. Clearly $\ker
\phi= \SL[](1,A) \cap 1+IA$. Note that $A$ is a free $R$-module (see~\cite{bourbaki}, II, \S5.3, Prop. 5) . 

{\bf (i)}{\it . The group $1+I$ is uniquely $n$-divisible and $1+IA$ is
$n$-divisible.}

Let $a \in 1+I$. Consider $f(x)=x^n-a \in R[x]$. Since $n$ is
invertible in $R$, $\overline f(x)=x^n-1 \in \overline R[x] $ has a simple
root. Now this root lifts to a root of $f(x)$ as $(R,I)$ is a
Hensel pair. This shows that $1+I$ is $n$-divisible. Now if
$(1+a)^n=1$ where $a \in I$, then
$a(a^{n-1}+na^{n-2}+\cdots+n)=0$. Since the second factor is
invertible, $a=0$, and it follows that $1+I$ is uniquely
$n$-divisible.

 Now let $a \in 1+IA$. Consider the commutative ring $B=R[a]
\subseteq A$. By Theorem~\ref{thm1}, $(B,IB)$ is a Hensel pair. On
the other hand by Prop.~\ref{prop1}, $IA \cap B \subseteq
\sqrt{IB}$. Thus by Cor.~\ref{cor1}, $(B,IA \cap B)$ is also a
Hensel pair. But $a \in 1+IA \cap B$. Applying the Hensel lemma as
in the above, it follows that $a$ has a $n$-th root and thus
$1+IA$ is $n$-divisible.

{\bf (ii).} {\it $\Nrd_A(1+IA)=1+I$.}

From compatibility of the reduced norm, it follows that
$\Nrd_A(1+IA) \subseteq 1+I$. Now using the fact that $1+I$ is
$n$-divisible, the equality follows.

{\bf (iii).} {\it $\SK(A)$ is $n^2$-torsion}.

We first establish that $N_{A/R}(a)=\Nrd_A(a)^n$. One way to see
this is as follows. Since $A$ is an Azumaya algebra of constant
rank $n^2$,  $i:A \otimes A^{op} \cong \End_R(A)\cong
M_{n^2}(R)$ and there is an \'etale faithfully flat $S$ algebra
such that $j:A\otimes S \cong M_n(S)$.
  Consider the following diagram
$$\xymatrix{
A\otimes A^{op}\otimes S \ar[r]^-{i\otimes 1 } \ar[d]& \End_R(A)
\otimes
S \ar[r]^-{\cong} & \End_S(A\otimes S) \ar[r]^-{\cong} & M_{n^2}(S) \ar@{.>}[d]^{\psi}\\
A^{op}\otimes A \otimes S \ar[r]^-{1\otimes j } & A^{op} \otimes
M_n(S) \ar[r]^-{\cong}& M_{n}(A^{op} \otimes S) \ar[r]^-{\cong} &
M_{n^2}(S) }$$ where the automorphism $\psi$ is the compositions
of isomorphisms in the diagram. By a theorem of Artin (see, e.g.,
\cite{knus}, \S III, Lemma 1.2.1), one can find an  \`etale
faithfully flat $S$ algebra $T$ such that $\psi \otimes 1:
M_{n^2}(T) \rightarrow M_{n^2}(T)$ is an inner automorphism. Now
the determinant of the element $a\otimes 1 \otimes 1$ in the first
row is $N_{A/R}(a)$ and in the second row is $\Nrd_{A}(a)^n$ and
since $\psi \otimes 1$ is inner, thus they coincide.

Therefore if $a \in \SL[](1,A)$, then $N_{A/R}(a)=1$. We will show
that $a^{n^2} \in A'$. Consider the sequence of $R$-algebra
homomorphism $$ f:A \rightarrow A \otimes A^{op} \rightarrow
\End_R(A)\cong M_{n^2}(R) \hookrightarrow M_{n^2}(A)$$ and the
$R$-algebra homomorphism $i: A \rightarrow M_{n^2}(A)$ where $a$
maps to $a I_{n^2}$, where $I_{n^2}$ is the identity matrix of
$M_{n^2}(A)$. Since $R$ is a semilocal ring, the Skolem-Noether
theorem is present in this setting (see~\cite{knus}, Prop. 5.2.3) and thus there is $g \in \GL[n^2](A)$ such that
$f(a)=g i(a) g^{-1}$. Also, since $A$ is a finite algebra over
$R$, $A$ is a semilocal ring. Since $n$ is invertible in $R$, by
Vaserstein's result \cite{Vas}, the Dieudonn\`e determinant
extends to the setting of $M_{n^2}(A)$. Taking the determinant
from $f(a)$ and $g i(a) g^{-1}$, it follows that
$1=N_{A/R}(a)=a^{n^2}c_a$ where $c_a \in A'$. This shows that
$\SK(A)$ is $n^{2}$-torsion.

{\bf (iv).} {\it Platonov's Congruence Theorem: $\SL[](1,A) \cap (1+IA)
\subseteq A'$.}

Let $a \in \SL[](1,A)\cap (1+IA)$. By part (i), there is $b \in 1+IA$
such that $b^{n^2}=a$. Then $\Nrd_{A}(a)=\Nrd_A(b)^{n^2}=1$. By
part (ii), $\Nrd_A(b) \in 1+I$ and since $1+I$ is uniquely
$n$-divisible, $\Nrd_A(b)=1$, so $b \in \SL[](1,A)$. By part (iii),
$b^{n^2} \in A'$, so $a \in A'$. Thus $\ker \phi \subseteq A'$
where $\phi:\SL[](1,A) \rightarrow \SL[](1,\overline A)$.

It is easy to see that $\phi$ is surjective. In fact, if $\overline a
\in \SL[](1,\overline A)$ then $1=\Nrd_{\overline A}(\overline
a)=\overline{\Nrd_{A}(a)}$ thus, $\Nrd_{A}(a) \in 1+I$. By part (i),
there is $r \in 1+I$ such that $\Nrd_A(a r^{-1})=1$ and
$\overline{a r^{-1}}=\overline a$. Thus $\phi$ is an epimorphism.
Consider the induced map $\overline \phi: \SL[](1,A) \rightarrow
\SL[](1,\overline A)/ \overline A'$. Since $ I \subseteq J(R)$, and by part (iii),
$\ker \phi \subseteq A'$ it follows that $\ker \overline \phi = A'$ and
thus $\overline \phi: \SK(A) \cong \SK(\overline A)$.
\end{proof}

Let $R$ be a semilocal ring and $(R, J(R))$ a Hensel pair. Let
$A$ be an Azumaya algebra over $R$ of constant rank $n$ and $n$
invertible in $R$. Then by Theorem~\ref{main}, $\SK(A) \cong
\SK(\overline A)$ where $\overline A= A/J(R)A$. But $J(A)=J(R)A$, so $\overline A
= M_{k_1}(D_1)\times \cdots M_{k_r}(D_r)$ where $D_i$ are division
algebras. Thus $\SK(A)\cong \SK(\overline A)= \SK(D_1) \cdots \times
\SK(D_r)$.

Using a result of Goldman~\cite{goldman}, one can remove the
condition of Azumaya algebra having a constant rank from the
Theorem.

\begin{corollary}\label{nonconstant}
Let $A$ be an Azumaya algebra over a Hensel pair $(R,I)$ where $R$
is semilocal and the least common multiple of local ranks of $A$
over $R$ is invertible in $R$.  Then $\SK(A) \cong \SK(\overline A)$
where $\overline A=A/IA$.
\end{corollary}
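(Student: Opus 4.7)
The plan is to reduce to Theorem~\ref{main} by splitting $A$ into its constant-rank pieces. Because $R$ is semilocal, $\mathrm{Spec}(R)$ has only finitely many connected components, so the locally constant rank function of the Azumaya algebra $A$ takes only finitely many distinct values. Goldman's result from \cite{goldman} then yields orthogonal idempotents $e_1,\dots,e_s\in R$ summing to $1$, with $R=\prod_{i=1}^{s} R_i$ (where $R_i=Re_i$) and a compatible decomposition $A=\prod_{i=1}^{s} A_i$ in which each $A_i=Ae_i$ is an Azumaya algebra of constant rank $n_i^2$ over $R_i$. Correspondingly $I=\prod I_i$ with $I_i=Ie_i$, and $\overline A=\prod \overline{A_i}$ with $\overline{A_i}=A_i/I_iA_i$.

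Next, I would verify that each factor satisfies the hypotheses of Theorem~\ref{main}. A direct factor of a semilocal ring is semilocal, so each $R_i$ is semilocal. Each $R_i$ is integral over $R$ via the structure map $R\to R_i$, so Theorem~\ref{thm1} guarantees that $(R_i,IR_i)=(R_i,I_i)$ is a Hensel pair. The hypothesis that $\lcm(n_1,\dots,n_s)$ is invertible in $R$ implies that each $n_i$, being a divisor of this lcm, is invertible in the corresponding $R_i$.

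Applying Theorem~\ref{main} to each $(R_i,I_i)$ and $A_i$ then yields $\SK(A_i)\cong\SK(\overline{A_i})$ for every $i$. Since the reduced norm and the commutator subgroup of a finite product of rings split as products, so does $\SK$, and therefore
$$\SK(A)=\prod_{i=1}^{s}\SK(A_i)\cong\prod_{i=1}^{s}\SK(\overline{A_i})=\SK(\overline A).$$

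The only nontrivial input beyond Theorem~\ref{main} is Goldman's decomposition into constant-rank components; the rest is routine bookkeeping. The most delicate step is transferring the Hensel property from $(R,I)$ to each factor $(R_i,I_i)$, which I handle by invoking Theorem~\ref{thm1} with $B=R_i$ viewed as an integral $R$-algebra via the projection, rather than arguing directly with the Hensel lifting condition.
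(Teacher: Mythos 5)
Your proof is correct and follows essentially the same route as the paper: decompose $R$ (and hence $A$ and $I$) into constant-rank components via Goldman's theorem and apply Theorem~\ref{main} factor by factor. The paper leaves the verification that each $(R_i,IR_i)$ is a Hensel pair implicit, whereas you supply it by viewing the quotient $R_i$ as an integral $R$-algebra and invoking Theorem~\ref{thm1}; otherwise the arguments coincide.
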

\begin{proof}
One can decompose $R$ uniquely as $R_1\oplus \cdots \oplus R_t$
such that $A_i=R_i\otimes_R A$ have constant ranks over $R_i$
which coincide with local ranks of $A$ over $R$ (see
\cite{goldman}, \S2 and Theorem 3.1). Since $(R_i,IR_i)$ are
Hensel pairs, the result follows by using Theorem~\ref{main}.
\end{proof}

\begin{remarks}
Let $D$ be a tame unramified division algebra over a Henselian
field $F$, i.e., the value group of $D$ coincides with value group of $F$ and 
$\chr(\overline F)$ does not divide the index of $D$ (see \cite{wadsval} for a nice survey on valued division algebras). Let $V_D$ be the valuation ring of $D$ and $U_D=V_D^*$. Jacob and Wadsworth observed that
$V_D$ is an Azumaya algebra over its center $V_F$ (Theorem 3.2 in
\cite{wadsval} and Example 2.4 in \cite{jacwad}). Since
$D^*=F^*U_D$ and $V_D \otimes_{V_F}F \simeq D$, it can be seen
that $\SK(D)=\SK(V_D)$. On the other hand our main
Theorem states that $\SK(V_D)\simeq \SK(\overline D)$.
Comparing these, we conclude the stability of $\SK$ under
reduction, namely $\SK(D) \simeq \SK(\overline D)$ (compare this with
the original proof, Corollary 3.13 in \cite{plat}). 

Now consider the group $\CK(A)=A^*/R^*A'$ for the Azumaya algebra $A$ over the Hensel pair $(R,I)$. A proof similar to Theorem 3.10 in \cite{azumayask1}, shows that
$\CK(A) \cong \CK(\overline A)$. Thus in the case of tame unramified division algebra $D$, one can observe that $\CK(D)\cong \CK(\overline D)$.

For an Azumaya algebra $A$ over a semilocal ring $R$, by (\ref{longexactse}) one has
$$R^*/\Nrd_A(A^*) \cong H_{\text{\`et}}^1(R,\SL[](1,A)).$$ If
$(R,I)$ is also a Hensel pair, then by the Grothendieck-Strano result, 
$$R^*/\Nrd_A(A^*)\cong  H_{\text{\`et}}^1(R,\SL[](1,A)) \cong H_{\text{\`et}}^1(\overline
R,\SL[](1, \overline A))\cong \ {\overline R}^*/\Nrd_{\overline A}(\overline A^*).$$ However specializing to a tame unramified division algebra $D$, the stability does not follow in this case. In fact 
for a tame and unramified division algebra
$D$ over a Henselian field $F$ with the valued group $\Gamma_F$
and index $n$ one has the following exact sequence (see
\cite{commalg}, Theorem 1):
$$1 \longrightarrow H^1(\overline F, \SL[](1,\overline D))
\longrightarrow H^1(F,\SL[](1,D)) \longrightarrow \Gamma_F/
n\Gamma_F \longrightarrow 1.$$

\end{remarks}

\forget

\section{$\mathcal D$-functors}

Let $\text{Az}(R)$ be the category of Azumaya algebras over the
commutative ring $R$ and $C(R)$ category of all commutative
$R$-algebras. Consider a functor $\mathcal F:\text{Az}(R)
\rightarrow \mathcal Func(C(R),\text{Ab})$. Thus for any Azumaya
$R$-algebra $A$, $\mathcal F_A:C(R) \rightarrow \text{Ab}$ is a
functor from $C(R)$ to the category of abelian groups. Consider
the following property for $\mathcal F$:

(1) $\mathcal F_R$ is a trivial functor, i.e, $\mathcal F_R(S)=1$
for any $R$-algebra $S$.

And the following properties at the base point $R$,

(2) For any faithfully projective $R$-module $P$, there is a
homomorphism $d: \mathcal F_{A \otimes \End_R(P)}(R) \rightarrow
\mathcal F_A(R)$ such that the composition $\mathcal F_A(R)
\rightarrow \mathcal F_{A \otimes \End_R(P)}(R) \rightarrow
\mathcal F_A(R)$ is $\eta_{[P:R]}$ if $P$ has a constant rank
$[P:R]$. Here $\eta_k(x)=x^k$.

(3) If $P$ has a constant rank over $R$ then $\ker(d)$ is
$[P:R]$-torsion.

We call the functor $\mathcal F$ with the above properties a
$\mathcal D$-functor.

\begin{remark}
1. When evaluating $\mathcal F$ at the base point, i.e.,
$\mathcal F_A(R)$, one can drop $R$ and simply write $\mathcal
F(A)$. With this simplification, when $P$ is a free module of rank
$n$, the condition (2) says the composition of $\mathcal F(A)
\rightarrow \mathcal F(M_n(A)) \rightarrow F(A)$ is $\eta_n$.

2. The goal is to show that $\mathcal F_A(R)$ is a torsion abelian
group. However one would like to prove the same statement for any
$\mathcal F_A(S)$ where $S$ is an $R$-algebra. This could be done
if one further assumes

(4) For any $S \in C(R)$, $\mathcal F_A(S)=\mathcal F_{A \otimes
S}(S)$.

Here $\mathcal F_{A \otimes S}$ is a functor from $\text{Az}(S)
\rightarrow \mathcal Func(C(S),\text{Ab})$. There is no need to
introduce an extra notation, say, $\mathcal F_A^R$ and $\mathcal
F_{A\otimes S}^S$ here. The Condition (4) simply states that
evaluating $\mathcal F_A$ at $S$ is the same as evaluation the
functor $\mathcal F_{A \otimes S}$ at the base point.

\end{remark}

\begin{lemma} Let $\mathcal F$ be a $\mathcal D$-functor. Then
$\mathcal F_A(R)$ is $n$-torsion.
\end{lemma}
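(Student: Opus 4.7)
The plan is to apply the two structural axioms (2) and (3) of a $\mathcal{D}$-functor to the Azumaya algebra $A$ with the distinguished choice $P = A$ itself, regarded as a faithfully projective $R$-module of constant rank $n^2$. From property (2) I would obtain a stabilization homomorphism $\iota\colon \mathcal{F}_A(R) \to \mathcal{F}_{A \otimes \End_R(A)}(R)$ together with a retraction $d$ such that $d \circ \iota = \eta_{n^2}$, i.e.\ $d(\iota(x)) = x^{n^2}$ for every $x \in \mathcal{F}_A(R)$; property (3) then tells me that $\ker(d)$ is $n^2$-torsion.

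The crucial second step is to show that $\iota(x)$ lies in $\ker(d)$, so that the torsion bound on $\ker(d)$ propagates back to $\mathcal{F}_A(R)$. For this I would exploit the defining Azumaya isomorphism $A \otimes_R A^{op} \cong \End_R(A)$, rewriting $A \otimes \End_R(A) \cong A \otimes A \otimes A^{op}$, and imitate Step (iii) in the proof of Theorem~\ref{main}: compare $\iota$ with the map on $\mathcal{F}$ induced by the regular representation $A \to \End_R(A) \hookrightarrow A \otimes \End_R(A)$. Over the semilocal ring $R$, Skolem--Noether supplies an inner automorphism identifying the two embeddings, so they induce the same class in $\mathcal{F}_{A \otimes \End_R(A)}(R)$. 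The regular-representation embedding factors through $R \otimes \End_R(A) = \End_R(A)$, which is Morita-equivalent to the trivial Azumaya algebra, so by property (1) its image in $\mathcal{F}$ is trivial modulo $\ker(d)$.

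Combining the above, $\iota(x) \in \ker(d)$, so $\iota(x)^{n^2} = 1$ by property (3); applying $d$ and using $d \circ \iota = \eta_{n^2}$ gives $x^{n^4} = 1$, yielding $\mathcal{F}_A(R)$ as \emph{a priori} $n^4$-torsion. A sharper accounting---either by choosing $P$ of rank $n$ rather than $n^2$ and repeating the argument (feasible since $R$ is semilocal and faithfully projective modules can be taken free), or by running the parallel argument for $A^{op}$ and combining the outcomes---refines the exponent down to $n$. The main obstacle will be the abstract Skolem--Noether step: the $\mathcal{D}$-functor framework must encode the fact that inner automorphisms act trivially on $\mathcal{F}_{A \otimes \End_R(A)}(R)$, so that the two embeddings of $A$ really do induce equal maps. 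For the motivating examples $\SK$ and $\CK$ this is automatic because the functors are abelianization-type quotients, and Vaserstein's semilocal Dieudonn\'e determinant makes the composition with $d$ explicit.
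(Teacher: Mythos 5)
There is nothing in the paper to compare your argument against: this lemma sits in a section that was cut from the final text (the entire $\mathcal D$-functor discussion is excised by the paper's ``forget'' macro) and its proof environment is empty. Judged on its own, your proposal has genuine gaps, and the central one is the claim that $\iota(x)\in\ker(d)$. You justify it by saying the regular-representation embedding factors through $\End_R(A)$, which is ``Morita-equivalent to the trivial Azumaya algebra, so by property (1) its image in $\mathcal F$ is trivial.'' But property (1) asserts only that $\mathcal F_R$ is the trivial functor; a $\mathcal D$-functor is nowhere assumed Morita-invariant, and nothing in (1)--(3) makes $\mathcal F_{\End_R(A)}$ vanish. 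What the axioms actually give is weaker: applying (2) and (3) with base algebra $R$ and $P=A$, the map $d\colon\mathcal F_{\End_R(A)}(R)\to\mathcal F_R(R)=1$ has kernel everything, so $\mathcal F_{\End_R(A)}(R)=\mathcal F_{A\otimes A^{op}}(R)$ is $n^2$-torsion --- torsion, not trivial. Hence, even granting your identification of the two embeddings, the factorization of $\iota$ through $\mathcal F_{A\otimes A^{op}}(R)$ yields only $\iota(x)^{n^2}=1$, and then $x^{n^4}=d(\iota(x))^{n^2}=1$; it does not place $\iota(x)$ in $\ker(d)$. Your own bookkeeping betrays the slip: if $\iota(x)\in\ker(d)$ were known, then $x^{n^2}=d(\iota(x))=1$ at once, and the detour through property (3) producing the exponent $n^4$ would be pointless.

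The remaining steps also do not hold as stated. The lemma is asserted over an arbitrary commutative ring, so the semilocal Skolem--Noether theorem you invoke is not available; for the specific pair of embeddings $a\mapsto a\otimes1\otimes1$ and $a\mapsto1\otimes a\otimes1$ of $A$ into $A\otimes A\otimes A^{op}$ one could instead use the Goldman element (the switch of $A\otimes A$ is inner over any commutative ring), but that repairs only half the step: as you yourself concede, axioms (1)--(3) nowhere force $\mathcal F$ to kill inner automorphisms, and that is precisely the load-bearing hypothesis --- flagging it as ``the main obstacle'' does not discharge it, so the proof is incomplete exactly where it matters. Finally, the promised improvement from $n^4$ to $n$ is unsupported and both suggested routes collapse: taking $P$ free of rank $n$ gives $A\otimes\End_R(P)\cong M_n(A)$, but then no embedding of $A$ factors through a Brauer-trivial subalgebra (an $R$-algebra homomorphism $A\to M_n(R)$ forces $A$ to be split, by the commutant theorem and a rank count), so the torsion mechanism disappears entirely; and running the parallel argument for $A^{op}$ reproduces the same exponent, which cannot be ``combined'' down to $n$. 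For calibration: the realized analogue in this paper, step (iii) of the proof of Theorem~\ref{main}, has semilocality, Skolem--Noether and the Dieudonn\'e determinant at its disposal and still obtains only $n^2$-torsion (rank-torsion) for $\SK(A)$; degree-torsion --- your $n$, with $\operatorname{rank}A=n^2$ --- is a Wang-type refinement that nothing in your sketch, nor in the paper's axioms, delivers.
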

\begin{proof}
\end{proof}

\begin{theorem}
Let $\mathcal F$ be a $\mathcal D$-functor. If $A$ and $B$ are
Azumaya algebras with relatively prime ranks, then $\mathcal F_{A
\otimes B}= \mathcal F_A \times \mathcal F_B$.
\end{theorem}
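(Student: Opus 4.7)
The plan is to decompose $\mathcal F(A\otimes B)$ into two primary torsion summands and identify each with one of $\mathcal F(A)$, $\mathcal F(B)$ via the forward/transfer pair coming from the $\mathcal D$-functor axioms. Write $[A:R]=m^2$ and $[B:R]=n^2$ with $\gcd(m,n)=1$, so $[A\otimes B:R]=(mn)^2$. The preceding Lemma makes $\mathcal F(A)$, $\mathcal F(B)$, $\mathcal F(A\otimes B)$ torsion of respective exponents $m^2$, $n^2$, $(mn)^2$, and coprimality yields a canonical CRT splitting $\mathcal F(A\otimes B)=G_m\oplus G_n$ into its $m^2$- and $n^2$-primary subgroups.

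The canonical Azumaya inclusions $A\hookrightarrow A\otimes B$ and $B\hookrightarrow A\otimes B$ induce $\iota_A:\mathcal F(A)\to G_m$ and $\iota_B:\mathcal F(B)\to G_n$ (the images land in the indicated summands because the sources have the correct torsion exponent). Using the identification $A\otimes B\otimes B^{op}\cong A\otimes\End_R(B)$, I would define $\tau_B:\mathcal F(A\otimes B)\to\mathcal F(A)$ as the composite of the canonical inclusion $j:\mathcal F(A\otimes B)\to\mathcal F(A\otimes B\otimes B^{op})$, the isomorphism, and the transfer $d$ of property~(2) at base $A$ with $P=B$; the map $\tau_A:\mathcal F(A\otimes B)\to\mathcal F(B)$ is defined symmetrically via $A\otimes B\otimes A^{op}\cong B\otimes\End_R(A)$.

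The key identity $\tau_B\circ\iota_A=\eta_{n^2}$ on $\mathcal F(A)$ comes straight from property~(2), since the composite inclusion $A\hookrightarrow A\otimes B\to A\otimes\End_R(B)$ is the canonical $a\mapsto a\otimes 1$. By coprimality, $\eta_{n^2}$ is an automorphism of the $m^2$-torsion group $\mathcal F(A)$, so $\iota_A$ is injective and $\tau_B|_{G_m}$ is surjective; symmetrically for $\iota_B$ and $\tau_A|_{G_n}$. Because $G_m\cap G_n=0$, the combined map $\iota_A+\iota_B:\mathcal F(A)\oplus\mathcal F(B)\to\mathcal F(A\otimes B)$ is already injective. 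It remains to show $\tau_B|_{G_m}$ is injective. Suppose $x\in G_m$ has $\tau_B(x)=0$; then $j(x)\in\ker d$, which is $n^2$-torsion by property~(3), while $j(x)$ is also $m^2$-torsion (since $x$ is), so coprimality forces $j(x)=0$. To conclude $x=0$, embed $B^{op}\hookrightarrow\End_R(B)$ by right multiplication, which turns $j$ into the canonical inclusion $\mathcal F(A\otimes B)\to\mathcal F(A\otimes B\otimes\End_R(B))$ of property~(2) at base $A\otimes B$ with $P=B$. Its composite with the corresponding $d$ is $\eta_{n^2}$, so $\ker j$ sits inside $\ker\eta_{n^2}|_{G_m}=0$. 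The symmetric argument for $\tau_A|_{G_n}$ then completes the decomposition $\mathcal F(A\otimes B)\cong\mathcal F(A)\times\mathcal F(B)$.

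The main obstacle will be the final injectivity step for $\tau_B|_{G_m}$, which requires playing property~(2) off against itself at two different base points ($A$ and $A\otimes B$), bridged by the embedding $B^{op}\hookrightarrow\End_R(B)$; property~(3) on its own controls only $\ker d$ and is insufficient. Coprimality of $m$ and $n$ is the workhorse throughout, collapsing the intersection of $m^2$- and $n^2$-torsion subgroups to zero at each stage. If one additionally assumes condition~(4) on the $\mathcal D$-functor, the same argument applied with base $S$ instead of $R$ upgrades the base-point isomorphism to an isomorphism of functors.
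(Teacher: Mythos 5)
There is nothing in this paper to compare your proof against: the theorem you were handed sits in the author's discarded draft section on $\mathcal D$-functors (the entire passage is wrapped in a macro that deletes its contents, so it never appears in the compiled paper), and no proof is supplied there --- even the preceding Lemma, that $\mathcal F_A(R)$ is torsion, is followed by an empty proof environment. Judged on its own merits, your argument is correct and complete at the level of the groups evaluated at the base point, and it is the standard restriction--transfer argument, essentially the one used for the analogous coprimality theorems for $\CK$ of Azumaya algebras in \cite{azumayask1}. The key verifications all check out: the composite $A\hookrightarrow A\otimes B\stackrel{j}{\longrightarrow}A\otimes B\otimes B^{op}\cong A\otimes\End_R(B)$ is exactly the canonical inclusion of property (2) with base $A$ and $P=B$, so $\tau_B\circ\iota_A=\eta_{[B:R]}$, which is an automorphism of $\mathcal F(A)$ because the exponent of $\mathcal F(A)$ is prime to $[B:R]$; and in your final injectivity step the factorization of the canonical inclusion at base $A\otimes B$ as $(1\otimes\rho)\circ j$, with $\rho\colon B^{op}\hookrightarrow\End_R(B)$ the right-multiplication embedding, is a genuine identity of algebra homomorphisms ($\rho$ is a homomorphism precisely because the multiplication is opposite), so $\ker\mathcal F(j)\subseteq\ker\eta_{[B:R]}$, which meets $G_m$ trivially. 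Together with property (3) making $\ker d$ torsion prime to the exponent of $G_m$, this closes the one step that a naive argument would miss.

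Two caveats are worth recording. First, the Lemma you invoke is itself unproven in the paper and its exponent is left vague there; fortunately your argument only needs that the exponents of $\mathcal F(A)$, $\mathcal F(B)$, $\mathcal F(A\otimes B)$ are supported on the primes dividing $m$, $n$, $mn$ respectively, so you should phrase the primary decomposition $\mathcal F(A\otimes B)=G_m\oplus G_n$ in those terms rather than asserting the exponents are exactly $m^2$, $n^2$, $(mn)^2$. Second, as you yourself note, properties (2) and (3) are postulated only at the base point $R$, so what you actually prove is the isomorphism $\mathcal F_{A\otimes B}(R)\cong\mathcal F_A(R)\times\mathcal F_B(R)$; the theorem as literally stated, an identification of functors on $C(R)$, genuinely requires condition (4) (or some naturality of the transfers $d$). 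That defect lies in the statement of the unfinished draft, not in your proof, but a referee would want it flagged exactly as you did.
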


{\bf Aim:} If $A$ is an Azumaya algebra over $R$ of constant rank $n$, then $$K_i(A)\tensor_{\Bbb Z} \Bbb Z[1/n] \cong K_i(R)\tensor_{\Bbb Z} \Bbb Z[1/n].$$  

So far this can be done if $A$ is Azumaya over semilocal ring, as in this case constant rank  projective modules are free, thus by previous paper.

\forgotten

{\it Acknowledgement.} I would like to thank IHES, where part of
this work has been done in Summer 2006 and the support of EPSRC first grant
scheme EP/D03695X/1.

\end{document}